\definecolor{darkred}{rgb}{0.6,0,0}
\newcommand{\Dt}{{\Delta t}}
\newcommand{\abs}[1]{\left\vert#1\right\vert}
\newcommand{\N}{\mathbb N}
\newcommand{\Z}{\mathbb Z}
\newcommand{\norm}[1]{\left\Vert#1\right\Vert}
\newcommand{\test}{\varphi}
\newcommand{\eps}{\varepsilon}
\DeclareMathOperator*{\sgn}{sign}
\newcommand{\sign}[1]{\sgn\left(#1\right)}
\newcommand{\dott}{\, \cdot\,}
\newcommand{\marginlabel}[1]%
       {\mbox{}\marginpar{\raggedleft\hspace{0pt}\tiny{\textcolor{red}{#1}}}}
\newcommand{\Dlp}{\Delta_+}
\newcommand{\Dlm}{\Delta_-}
\newcommand{\ob}[1]{\overline{#1}}
\newcommand{\indic}{\mathbb I}
\newcommand{\pt}{\partial_t}
\newcommand{\bigOh}{\mathcal{O}}
\newtheorem{theorem}{Theorem}[section]
\newtheorem{lemma}[theorem]{Lemma}
\newtheorem{remark}[theorem]{Remark}
\newtheorem{remark*}[theorem]{Remark}
\newtheorem{corollary}[theorem]{Corollary}
\numberwithin{equation}{section}     
\begin{document}
\title[Non-local Follow-the-Leader]{The continuum limit of non-local Follow-the-Leader
models}

\author[Holden]{Helge Holden}
\address[Holden]{\newline
    Department of Mathematical Sciences,
    NTNU Norwegian University of Science and Technology,
    NO--7491 Trondheim, Norway}
\email[]{\href{helge.holden@ntnu.no}{helge.holden@ntnu.no}} 
\urladdr{\href{https://www.ntnu.edu/employees/holden}{https://www.ntnu.edu/employees/holden}}

\author[Risebro]{Nils Henrik Risebro}
\address[Risebro]{\newline
 Department of Mathematics,
University of Oslo,
  P.O.\ Box 1053, Blindern,
  NO--0316 Oslo, Norway }
\email[]{\href{nilshr@math.uio.no}{nilshr@math.uio.no}} 
\urladdr{\href{https://www.mn.uio.no/math/english/people/aca/nilshr/index.html}{https://www.mn.uio.no/math/english/people/aca/nilshr/index.html}}
\date{\today} 

\subjclass[2010]{Primary: 35L02; Secondary:  35Q35, 82B21}

\keywords{Follow-the-Leader model, Lighthill--Whitham--Richards model, traffic flow, continuum limit.}

\thanks{Both authors were supported by the Swedish Research Council under grant no. 2021-06594 while they were in residence at Institut Mittag-Leffler in Djursholm, Sweden during the fall semester of 2023. The research of HH was supported in part by the project \textit{IMod --- Partial differential equations, statistics and data: An interdisciplinary approach to data-based modelling}, project number 65114, from the Research Council of Norway. }


\begin{abstract} 
We study a generalized Follow-the-Leader model where the driver considers the position of an arbitrary but finite number of vehicles ahead, as well as  the position of the vehicle directly behind the driver. It is proved that this model converges to the classical Lighthill--Whitham--Richards model for traffic flow when traffic becomes dense.  This also underscores the robustness of the Lighthill--Whitham--Richards model.
\end{abstract}

\maketitle
\section{Introduction} \label{sec:intro}

We study a generalized Follow-the-Leader (FtL) model for unidirectional traffic of $M_\ell$ vehicles on a single lane road given by
\begin{align}
\dot x_i(t)&= \sum_{j=0}^N c_j v\bigg(\frac{\ell}{x_{i+j+1}(t)-x_{i+j}(t)}\bigg) \notag \\
&\quad+ \kappa \Bigg(v\bigg(\frac{\ell}{x_{i+1}(t)-x_{i}(t)}\bigg)-v\bigg(\frac{\ell}{x_{i}(t)-x_{i-1}(t)}\bigg) \Bigg), \quad i=1,\dots,M_\ell. \label{eq:MAIN}
\end{align}
Here the position of the $i$th vehicle, each of length $\ell$, is $x_i(t)$ at time $t$. The system \eqref{eq:MAIN} is closed by posing appropriate periodic boundary conditions, see later.

The velocity function $v$ is a decreasing function that vanishes at maximum capacity of the road. Each driver considers the distances to the $N$ vehicles ahead and the one vehicle right behind. The impact of more distant vehicles is less pronounced,  and thus we assume that the constants $c_j$ decrease, i.e.,  $c_j\ge c_{j+1}\ge 0$.  In addition, the driver considers the distance to the vehicle right behind, and if that becomes too short, the driver will speed up, and the coefficient $\kappa$ measures this influence. 

We show that as the length of each vehicle becomes smaller, $\ell\to 0$  and the number of vehicles increases, $M_\ell\to \infty$, while $N$ is kept fixed, the distribution of vehicles will approach the solution of the classical Lighthill--Whitham--Richards (LWR) model \cite{LW_II,richards}
\begin{equation} \label{eq:LWR}
\rho_t+(\rho v(\rho))_x=0,
\end{equation}
where the density, or rather saturation,  $\rho$ is approximated by  $\ell/(x_{i+1}(t)-x_{i}(t))$.  This is of course a scalar hyperbolic conservation law \cite{HoldenRisebro}.  The result is independent of the finer details given by $N$ and the coefficients $c_j$, $\kappa$, and shows the robustness of the LWR model.   As long as $N$ remains unaltered in the limiting process, this limit is distinct from the widely studied problem of non-local to local limit for conservation law, see, e.g., \cite{MR4651679,MR4362534}.

The model \eqref{eq:MAIN} is based  on the following anticipated behavior: In general, the longer the distance to the vehicle in front, the faster the drivers are willing to drive.  However, each driver considers the distance between successive vehicles ahead of the driver.  More weight is given to the vehicles close to the driver. By the same token, the driver can look in the rear mirror and assess the distance to the vehicle immediately behind. If that distance is shorter than the distance to the vehicle just in front, the driver will speed up. This generalization of the FtL model is more realistic than the traditional one, as it takes into account finitely many vehicles ahead of the driver, and includes the observed fact that also the behavior of the vehicle right behind you influences your actions.

More specifically, we are given coefficients $c_j\ge 0$ that indicate the weight given to the velocity between the $j$th and $(j+1)$th vehicle (as counted from the  $i$th vehicle). We assume that the influence drops with the distance, thus  $c_j\ge c_{j+1}\ge 0$. The driver is willing to consider $N$ vehicles.  We have $\sum_{j=0}^N c_j=1$, and for convenience we put $c_N=0$. Similarly, if the distance to the vehicle immediately behind is shorter than the distance to the vehicle directly in front, the driver will speed up. The impact of this is scaled by the numerical parameter $\kappa>0$. This results in the model \eqref{eq:MAIN}.

There are two main classes of mathematical models for traffic flow, namely discrete models based on car-following on the one hand, and, on the other hand, continuum models based on the assumption of dense traffic for which the flow can be described by a density, resulting in  ``traffic hydrodynamics'' models. 
The dichotomy between microscopic and macroscopic models, or discrete and continuum models, is of course one of the fundamental outstanding problems of mathematical physics. The problem here is a considerably easier than the general problem, however, it allows for a rigorous analysis of the limit, as the number ``particles'' tends to infinity. 

There is a wide range of car-following models dating from the late fifties and early sixties, see \cite{Brackstone}. We here generalize a Follow-the-Leader model based on what is called safety-distance models or collision avoidance models, as a feature of this model is that it is collision-free. On the other hand, the LWR model \cite{LW_II,richards} has been, and still is, the prevalent continuum model. A consequence of the analysis in the present paper is that  even for the generalized FtL model presented here, the scaling limit remains the LWR model, and this offers yet another justification for the LWR model. 

By now there are several ways to show that the standard FtL model converges to the LWR model, the first one being \cite{FrancescoRosini}. The realization that the Follow-the-Leader is nothing but a semi-discrete approximation of the LWR in Lagrangian coordinates simplified the proof considerably, see \cite{holris18a,holris18b}.   See also \cite{Argall_etal,AwKlarMaterneRascle,ColomboRossi,1605.05883,rosini, GoatinRossi,Rossi,FrancescoStivaletta} and references therein. There are also proofs in the setting of traffic on a network \cite{colombomarcelliniholden,CristianiSahu}.

\medskip
Let us now describe the content of this paper. We introduce the short-hand notation $\Delta_\pm a_i=\pm(a_{i\pm 1}-a_i)$ and $\bar a_i=\sum_{j=0}^N c_j a_{i+j}$. If we define $v_i=v(\ell/(x_{i+1}(t)-x_{i}(t)))=v(\ell/\Delta_+x_i(t))$, the model \eqref{eq:MAIN} takes the compact form
\begin{equation}
\dot x_i(t)= \bar v_i+\kappa \Delta_-v_i. \label{eq:MAIN_short}
\end{equation}
It turns out that it is convenient to introduce the Lagrangian variable $y_i(t)=\Delta_+x_i(t)/\ell$, and then we find the following equation
\begin{equation}
\ell\dot y_i(t)= \Delta_+\bar V_i+\kappa \Delta_+\Delta_-V_i, \label{eq:MAIN_shortL}
\end{equation}
with $V(y)=v(1/y)$, and hence $V_i=V(y_i)=v_i$.  Note that for $N=1$ we recover the traditional FtL model. 

The proof  that the FtL model converges to the LWR model proceeds as follows: To avoid technical complications associated with boundary terms, we consider the periodic case. More precisely, we impose in equation \eqref{eq:MAIN_shortL} that
\begin{equation*}
y_{i+M_\ell}=y_i, \quad i \in \Z.
\end{equation*}

We first analyze the equation for $y_i$. By introducing a spatially piecewise constant function $y_\ell$ using the values $y_i$ and a fixed grid in space of size $M_\ell=1/\ell$, cf.~\eqref{eq:lattice_y}, we can show that 
\begin{equation*}
\abs{y_{\ell}(t,\dott)}_{BV([0,1])}\le \abs{y_{\ell}(0,\dott)}_{BV([0,1])}, \quad   \norm{y_\ell(t,\dott)-y_\ell(s,\dott)}_{L^1([0,1])}\le C \abs{t-s}.
\end{equation*}
Furthermore, if we have another solution $z_i(t)$ of \eqref{eq:MAIN_shortL}, we get stability in the sense that
\begin{equation*}
\norm{y_\ell(t,\dott)- z_\ell(t,\dott)}_{L^1([0,1])}\le \norm{y_\ell(0,\dott)- z_\ell(0,\dott)}_{L^1([0,1])}.
\end{equation*}
This suffices to obtain  strong convergence  $y_\ell\ \to \ y$ in $C([0,T]; L^1([0,1]))$ as $\ell\to 0$.  However, as we are interested in the
Eulerian formulation in terms of the density $\rho$, we simply define  $\rho_i(t)=1/y_i(t)$, and derive the corresponding equation for  $\rho_i(t)$, and translate the properties from $y_i$ to $\rho_i$. We introduce the function  $\rho_\ell(t,x)=\sum_{i=1}^{M_\ell} \rho_i(t) \indic_i(t,x)$ where  
$\indic_i(t,x)$ is the indicator function of the time-dependent
spatial interval $[x_i(t),x_{i+1}(t))$.  Since $y_i$ is periodic, the
corresponding $\rho_i$ will be periodic in  Eulerian coordinates, with some period $P$, see equation \eqref{eq:x-periodic}.
We ensure that we stay away from vacuum by assuming that $\rho_0\ge \nu>0$. Then we can prove, cf.~Lemma \ref{lem:convergence},
 \begin{align*}
    \inf \rho_0 \le \rho_\ell(t,x) &\le \sup\rho_0,\\
    \abs{\rho_\ell(t,\dott)}_{BV([0,P])}&\le \frac1{\nu^2}\abs{\rho_0}_{BV([0,P])}, \\
    \norm{\rho_\ell(t,\dott)-\rho_\ell(s,\dott)}_{L^1([0,P])}
    &\le C \abs{t-s}.
  \end{align*}
Observe that we get the somewhat unexpected constant $1/\nu^2$ in the estimate for bounded variation. This is the case provided $N>1$; in the classical case $N=1$ the constant is replaced by  unity. 

These estimates establish the existence of a limit  $\rho\in C([0,T];L^1([0,P]))\cap L^\infty([0,T];BV([0,P]))$, cf.~Corollary \ref{cor:rhoconv}, as $\ell\to 0$. Here $L^\infty([0,T];BV([0,P]))$ denotes the set of functions $u=u(t,x)$ with
\begin{equation*}
\sup_{t\in[0,T]}\abs{u(t,\dott)}_{BV([0,P])}<\infty.
\end{equation*}
It remains to show that the limit equals the unique weak entropy solution of the LWR equation \eqref{eq:LWR}, which means that it satisfies, cf.~Theorem \ref {lem:weaksolN}, 
\begin{equation*}
      \int_0^\infty\int_0^P \big(\eta(\rho)
    \test_t +q(\rho)\test_x\big) \,dxdt
      +\int_0^P \eta(\rho_0)\test(0,x)\,dx\ge 0
  \end{equation*}
  for any non-negative $P$-periodic test function $\test\in C^\infty_c([0,\infty)\times[0,P])$.   Here $\eta$ is a convex (entropy) function and  $q$ is the entropy flux, satisfying $q'(\rho)=\eta'(\rho)(\rho v(\rho))'$.   This shows that $\rho$ is indeed the unique weak entropy solution of the LWR equation.

\section{The model}  \label{sec:model}

Consider $M_\ell$ identical vehicles on a unidirectional, single lane
road with initial positions $x_1(0) <x_2(0) < \cdots < x_{M_\ell}(0)$
where $x_{i+1}(0)-x_i(0)>\ell$, with $\ell>0$ being the length of each
vehicle. The velocity $v$ is assumed to be a decreasing Lipschitz function of
a single variable. The ``non-localness'' enters the model in the
following way.

Given constants $c_j$ for $j=0,\dots,N$
\begin{equation}
\sum_{j=0}^N c_j=1, \quad c_0\ge \cdots \ge c_{N-1}\ge 0, \  c_N=0,
\end{equation}
we define for any sequence $a_i$
\begin{equation}
\bar a_i=\sum_{j=0}^N c_j a_{i+j}.
\end{equation}
Furthermore, we define the traditional shift operators as follows
\begin{equation}
\Delta_\pm a_i=\pm(a_{i\pm 1}-a_i).
\end{equation}
Observe that we have, by applying summation by parts, that
\begin{equation}
\begin{aligned}
  \Dlp \ob{a}_i
  = \ob{\Dlp{a}_i}
  &=\sum_{j=0}^N c_j\left(a_{i+j+1}-a_{i+j}\right) =-c_0 a_i + \sum_{j=1}^N \left(c_{j-1}-c_j\right)a_{i+j} \\
  &=\sum_{j=1}^N\left(c_j-c_{j-1}\right)\left(a_i-a_{i+j}\right) 
 =-\sum_{j=1}^N \Dlm c_j \left(a_{i+j}-a_i\right).
\end{aligned}\label{eq:byparts}
\end{equation}

Given a Lipschitz continuous non-increasing velocity function $v\colon[0,1] \to [0,1]$ with $v(0)=1$ and  $v(1)=0$, we assume that the dynamics of the $i$th vehicle is given by 
\begin{align}
\dot x_i(t)&= \sum_{j=0}^N c_j v\big(\frac{\ell}{x_{i+j+1}(t)-x_{i+j}(t)}\big) \notag \\
&\quad+ \kappa \Big(v\big(\frac{\ell}{x_{i+1}(t)-x_{i}(t)}\big)-v\big(\frac{\ell}{x_{i}(t)-x_{i-1}(t)}\big) \Big), \quad i\in Z_\ell, \label{eq:MAINmodel}
\end{align}
with $Z_\ell=\{1,\dots,M_\ell\}$, and $\kappa$ is a positive constant.  
With the introduced notation 
we can write \eqref{eq:MAINmodel} compactly as
\begin{equation}
\dot x_i(t)= \bar v_i+\kappa \Delta_-v_i,\label{eq:MAIN_shortmodel}
\end{equation}
with $v_i=v(\ell/\Delta_+x_i(t))$.  To avoid technicalities connected with boundary conditions, we assume periodicity. More concretely, we assume the existence of a positive $P$ such that 
\begin{equation}\label{eq:x-periodic}
\text{$x_{i+M_\ell}(t)=x_i(t)+P$ for all $i\in\Z$ and all $t$}.
\end{equation}

As in \eqref{eq:MAIN_shortL} we write  $y_i(t)=\Delta_+x_i(t)/\ell$, which implies
that \eqref{eq:MAIN_shortmodel} takes the form
\begin{equation}
\ell\dot y_i(t)= \Delta_+\ob{V_i}+\kappa \Delta_+\Delta_-V_i, \quad i\in Z_\ell, \label{eq:MAIN_shortLmodel}
\end{equation}
where $V(y)=v(1/y)$, and hence $V_i=V(y_i)=v_i$.   Note that
$y_{i+M_\ell}=y_i$ for all $i$. This gives a finite-dimensional system
of ordinary differential equations, and (local in time) existence of a unique solution follows from standard theory. Clearly, $V$ is increasing and $V\colon [1,\infty)\to [0,1]$. 

\section{The continuum limit}
Next we will study the limit when $\ell\to 0$. 

\subsection{Entropy estimates}
Inspired by conservation laws, let $(\eta,Q)$ be
an entropy/entropy flux pair, i.e., $\eta$ is twice continuously
differentiable and convex, and $Q$ is defined by $Q'=\eta' V'$.  We
multiply \eqref{eq:MAIN_shortLmodel} with $\eta'(y_i)$ and use   \eqref{eq:byparts} to obtain 
\begin{align*}
  \ell \frac{d}{dt}\eta_i
  &=\eta'(y_i) \Dlp\ob{V_i}+\kappa \eta'(y_i)\Dlp\Dlm V_i\\
  &=\Dlp\ob{Q_i} + \eta'(y_i)\Dlp\ob{V_i} - \Dlp\ob{Q_i} \\
   &\quad +\kappa \Dlp\left(\eta'(y_i)\Dlm V_i\right)-\kappa \left(\Dlp
    \eta'(y_i)\right)
    \left(\Dlp V_i\right)\\
  &= \Dlp\ob{Q_i} + \sum_{j=1}^N\left(c_j-c_{j-1}\right)
    \left[\left(\eta'(y_i)V_i - Q_i\right) -
    \left(\eta'(y_i)V_{i+j} - Q_{i+j}\right)\right]
  \\
  &\quad  +\kappa \Dlp\left(\eta'(y_i)\Dlm V_i\right)-\kappa \left(\Dlp
    \eta'(y_i)\right)
    \left(\Dlp V_i\right)\\ 
  &=\Dlp\ob{Q_i} + \sum_{j=1}^N\left(c_j-c_{j-1}\right)
    H(y_i,y_{i+j})\\
  &\quad
     +\kappa \Dlp\left(\eta'(y_i)\Dlm V_i\right)-\kappa \left(\Dlp
    \eta'(y_i)\right)
    \left(\Dlp V_i\right),
\end{align*}
where
\begin{align*}
  H(a,b)
  &=\left[ \left(\eta'(a) V(a)-Q(a)\right)
    - \left(\eta'(a)V(b)-Q(b)\right)\right]\\
  &= \int^a_0 (\eta'(a)-\eta'(\sigma))V'(\sigma)\,d\sigma -
    \int^b_0 (\eta'(a)-\eta'(\sigma))V'(\sigma)\,d\sigma\\
  &=\int_a^b(\eta'(\sigma)-\eta'(a))V'(\sigma)\,d\sigma
    =\int_a^b \int_a^\sigma \eta''(\mu)\,d\mu V'(\sigma)\,d\sigma\ge
    0.
\end{align*}
Here we have written, in obvious notation, $\eta_i=\eta(y_i)$, $Q_i=Q(y_i)$.  Furthermore, we get
\begin{equation}
  \label{eq:entrbnd}
  \begin{aligned}
    \ell \frac{d}{dt}\eta_i-\sum_{j=1}^N\left(\Dlm c_j\right)
    H(y_i,y_{i+j})+&\kappa \left(\Dlp \eta'(y_i)\right)
    \left(\Dlp V_i\right) \\
    &= \Dlp\ob{Q_i}+\kappa \Dlp\left(\eta'(y_i)\Dlm V_i\right),
  \end{aligned}
\end{equation}
and since $c_j\le c_{j-1}$,  we see that the second term is non-negative.  Furthermore, since $\eta'$ and $V$ are both increasing functions, also the third term is non-negative. This entropy
equality immediately implies the entropy inequality
\begin{equation}
  \label{eq:entrineq}
  \ell \frac{d}{dt}\eta_i\le  \Dlp\ob{Q_i}+\kappa \Dlp\left(\eta'(y_i)\Dlm V_i\right),
\end{equation}
and by an approximation argument, this is valid for any Lipschitz
continuous convex entropy $\eta$. Hence
\begin{equation}\label{eq:entropydecrease}
  \frac{d}{dt}\ell\sum_{i\in Z_\ell} \eta(y_i(t)) \le 0.
\end{equation}
Choosing
\begin{equation*}
  \eta(y)=\left(y-\inf_i y_i(0)\right)^-\ \ \ \text{and}\ \ \
  \eta(y)=\left(y-\sup_i y_i(0)\right)^+,
\end{equation*}
where $a^\pm=(\abs{a}\pm a)/2$, implies that
\begin{equation*}
  \inf_{i\in Z_\ell} y_i(0)\le y_i(t)\le \sup_{i\in Z_\ell} y_i(0),
\end{equation*}
for any positive $t$. Incidentally, this shows that the systems of
ordinary differential equations, \eqref{eq:MAIN_shortmodel} and \eqref{eq:MAIN_shortLmodel}, both have unique
global solutions for $t\in (0,\infty)$. Furthermore, it shows that  the model does not allow for collisions. 

Consider next another solution $z_i(t)$ of \eqref{eq:MAIN_shortLmodel} with initial data $z_i(0)$.
Subtract the equation for $z_i$ from the corresponding equation for $y_i$, and multiply by $\sign{y_i-z_i}$ to get
\begin{align}
  \ell \frac{d}{dt}\abs{y_i-z_i}
  &=\ell\sign{y_i-z_i}\frac{d}{dt}\left(y_i-z_i\right)\notag\\
  &=\sign{y_i-z_i}\Dlp\big(\ob{V(y_i)-V(z_i)}\big)\notag\\
  &\quad+\kappa\sign{y_i-z_i}\Dlm\Dlp\left(V(y_i)-V(z_i)\right)\notag\\
  &=\sign{y_i-z_i}\sum_{j=1}^N\left(c_j-c_{j-1}\right)
    \left[\left(V(y_i)-V(z_i)\right) - \left(V(y_{i+j})-V(z_{i+j})\right)
    \right]\notag\\
  &\quad + \kappa\Dlp\left(\sign{y_i-z_i}\Dlm(V(y_i)-V(z_i))\right)
\notag  \\
  &\quad
    -\kappa\left(\Dlp\sign{y_i-z_i}\right)\left(\Dlp(V(y_i)-V(z_i))\right)\notag\\
  &\le\sum_{j=1}^N\left(c_j-c_{j-1}\right)
    \left[\abs{V(y_i)-V(z_i)} - \abs{V(y_{i+j})-V(z_{i+j})}
    \right] \notag\\
  &\quad + \kappa\Dlp\left(\sign{y_i-z_i}\Dlm(V(y_i)-V(z_i))\right)\notag\\
  &=\Dlp\ob{\abs{V(y_i)-V(z_i)}} + \kappa\Dlp\left(\sign{y_i-z_i}\Dlm(V(y_i)-V(z_i))\right), \label{eq:L1stab}
\end{align}
where we used the fact that $c_j\le c_{j-1}$ to get the
inequality. In addition, we used that 
\begin{align*}
&\left(\Dlp\sign{y_i-z_i}\right)(\Dlp(V(y_i)-V(z_i))\\
   &\qquad\qquad=\left(\Dlp\sign{V(y_i)-V(z_i)}\right)\left(\Dlp(V(y_i)-V(z_i))\right)\ge 0,
\end{align*}
as both the signum function and $V$ are increasing functions. 
This immediately gives the stability estimate
\begin{equation}
  \label{eq:l1stab}
  \ell\sum_{i\in Z_\ell}\abs{y_i(t)-z_i(t)}\le \ell\sum_{i\in Z_\ell}\abs{y_i(0)-z_i(0)},
\end{equation}
for any $t>0$.  

Choosing $z_i = y_{i-1}$ yields a bound on the
variation of $y_i$, viz. 
\begin{equation}\label{eq:bvbnd}
  \sum_{i\in Z_\ell}\abs{y_i(t)-y_{i-1}(t)}\le
  \sum_{i\in Z_\ell}\abs{y_i(0)-y_{i-1}(0)}.
\end{equation}
It is convenient to view $y_i$ as a spatially $1$-periodic function. 
Define  for $i=0,\dots,M_\ell$  the quantities $\zeta_{i}=i/M_\ell$ and $I_i=[\zeta_{i},\zeta_{i+1})$. Then we let
\begin{equation}\label{eq:lattice_y}
  y_\ell(t,\zeta)=\sum_{i\in Z_\ell} y_i(t)\indic_{I_i}(\zeta),
\end{equation}
where $\indic_A$ is the indicator function of the set $A$.
Using this, \eqref{eq:l1stab} and \eqref{eq:bvbnd} read  $\norm{y_\ell(t,\dott)- z_\ell(t,\dott)}_{L^1([0,1])}\le \norm{y_\ell(0,\dott)- z_\ell(0,\dott)}_{L^1([0,1])}$ and $\abs{y_{\ell}(t,\dott)}_{BV([0,1])}\le \abs{y_{\ell}(0,\dott)}_{BV([0,1])}$, respectively.

Next we consider the time continuity of the approximate solution. Let now $t\ge s\ge 0$, and calculate 
\begin{align}
  \norm{y_\ell(t,\dott)-y_\ell(s,\dott)}_{L^1([0,1])}
  &=
  \ell\sum_{i\in Z_\ell}\abs{y_i(t)-y_{i}(s)} \notag\\
  &=\ell\sum_{i\in Z_\ell}\Bigl|\int_s^t\frac{d}{d\tau}
    y_i(\tau)\,d\tau\Bigr|\notag\\
  &\le \sum_{i\in Z_\ell}\int_s^t\Bigl|\ell\frac{d}{d\tau}
    y_i(\tau)\Bigr|\,d\tau\notag\\
  &=\sum_{i\in Z_\ell}\int_s^t
    \Bigl|\Dlp\ob{V(y_i(\tau))}+\kappa\Dlp\Dlm V(y_i(\tau))\Bigr|\,d\tau\notag\\
  &\le
    \norm{V'}_\infty\sum_{i\in Z_\ell}\int_s^t\abs{\Dlp\ob{y_i(\tau)}}+2\kappa
    \abs{\Dlp y_i(\tau)}\,d\tau\notag\\
  &\le \norm{V'}_\infty (1+2\kappa)\abs{y_{\ell}(0,\dott)}_{BV} (t-s), \label{eq:Lip}
\end{align}
so that the map $t\mapsto y_\ell(t,\dott)$ is Lipschitz continuous in $L^1$.  Here we used that
\begin{align}
\abs{\ob{a}}_{BV}= \sum_{i\in Z_\ell}\abs{\ob{\Dlp a_i}}&=  \sum_{i\in Z_\ell}\Bigl|\sum_{j=0}^N c_j \Dlp a_{i+j}\Bigr| 
\le  \sum_{i\in Z_\ell}\sum_{j=0}^N c_j \abs{\Dlp a_{i+j}} \notag \\
&=\sum_{j=1}^N  c_j \sum_{i\in Z_\ell}\abs{\Dlp a_{i+j}}
=\sum_{j=0}^N  c_j \abs{a}_{BV}=\abs{a}_{BV}. \label{eq:BVmean}
\end{align}
Note that the total variation is computed on $D_\ell$.

Recall that $\ell M_\ell=1$, from the estimates \eqref{eq:l1stab},
\eqref{eq:bvbnd}, and \eqref{eq:Lip} we can conclude the
strong convergence 
\begin{equation*}
  y_\ell\ \to \ y\ \ \ \text{in $C([0,T]; L^1([0,1]))$ as $\ell\to 0$},
\end{equation*}
 as $\ell\to0$,
see \cite[Thm.~A.8]{HoldenRisebro}.

At this point we could have shown that the limit $y$ is the unique entropy solution of the conservation
law $y_t - V_z=0$, subsequently transfer the result to Eulerian coordinates, and finally show that the corresponding density, or rather saturation,  is an entropy
solution of the LWR model. 

However, 
we shall do this directly for $\rho$ in the Eulerian setting. To this
end, we define $\rho_i(t)=1/y_i(t)$. Applying equation \eqref{eq:MAIN_shortLmodel} we find
\begin{equation}
  \label{eq:drhodef}
  \dot{\rho}_i = -\rho_i \Bigl(\frac{\Dlp \ob{v}_i}{\Dlp
    x_i}+\kappa\frac{\Dlp\Dlm v_i}{\Dlp x_i}\Bigr),
\end{equation}
for $t>0$, using  $\rho_i(t)=\ell/\Dlp x_i(t)$.  Observe the straightforward transition between the Lagrangian formulation \eqref{eq:MAIN_shortLmodel}  and the Eulerian formulation  \eqref{eq:drhodef}, sharply contrasting the cumbersome transition in the continuum case.

Next we define the appropriate initial data.  Assume that we
are given a non-negative $P$-periodic function $\rho_0\in L^1([0,P])\cap BV([0,P])$, such
that $\rho_0(x)\in [\nu,1]$ for all $x$, where $\nu$ is some small
positive number. This last assumption excludes vacuum.  

Choose
\begin{equation*}
M_\ell \in \N,\ \ \text{and define}\ \ \ell= \frac1{M_\ell}\int_0^P \rho_0(x)\, dx. 
\end{equation*}
Let  $x_0(0)=0$ and 
define $x_{i+1}(0)$ inductively by
\begin{equation}\label{eq:x0def}
  \int_{x_i(0)}^{x_{i+1}(0)} \rho_0(\xi)\,d\xi = \ell,\quad i=0,\dots, M_\ell-1.
\end{equation}
Next, we set
\begin{equation}\label{eq:rho0def}
  \rho_i(0)=\frac{\ell}{x_{i+1}(0)-x_i(0)}=:\frac{1}{y_i(0)}, \quad i=1,\dots,M_\ell,
\end{equation}
extending it periodically by $\rho_{i+M_\ell}(0)=\rho_i(0)$ for $i\in\Z$.  
Finally, we define $\rho_i(t)$ as the solution of \eqref{eq:drhodef} with initial data 
$\rho_i(0)$ defined by \eqref{eq:rho0def}.  
We can then
introduce
\begin{equation}\label{eq:rho_ell_def}
  \rho_\ell(t,x)=\sum_{i\in \Z} \rho_i(t) \indic_i(t,x),
\end{equation}
where $\indic_i(t,x)=\indic_{[x_i(t),x_{i+1}(t))}(x)$.  Note that $\indic_i$ and $\indic_{I_i}$ are distinct.   Observe that at this point the variable $y_i$ is superfluous; it is used only as a tool
to obtain the convergence. Converting the above calculations we get the following result.
\begin{lemma}
  \label{lem:convergence}
  Let $\rho_i$ for $i\in Z_\ell$ be defined by  \eqref{eq:drhodef}, and  $\rho_\ell(t,x)$ by \eqref{eq:rho_ell_def}.  Assume
  also that $\rho_\ell(0,x)=\rho_{\ell,0}(x) \ge\nu>0$ for all $\ell>0$ and $x$. Then  
  \begin{align}
    \label{eq:rhosup}
    \inf \rho_0
    &\le \rho_\ell(t,x) \le \sup\rho_0,\\
      \sum_{i\in Z_\ell} \abs{\rho_i(t)-\tilde\rho_i(t)}
    &\le \frac1{\nu^2} \sum_{i\in Z_\ell} \abs{\rho_{i,0}-\tilde\rho_{i,0}},\label{eq:rhoL1} \\
    \abs{\rho_\ell(t,\dott)}_{BV([0,P])}
    &\le\frac{1}{\nu^2}
      \abs{\rho_0}_{BV([0,P])}, \label{eq:rhobv}\\
    \norm{\rho_\ell(t,\dott)-\rho_\ell(s,\dott)}_{L^1([0,P])}
    &\le 2(1+2\kappa)\norm{v'}_\infty (t-s),\label{eq:rholip}
  \end{align}
  for all $0\le s\le t$. Here $\rho_\ell$ and $\tilde \rho_\ell$ are
  two solutions with $P$-periodic initial data $\rho_{\ell,0}$ and  $\tilde
  \rho_{\ell,0}$, respectively.
\end{lemma}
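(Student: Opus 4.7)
The plan is to transfer the four Lagrangian estimates from Section~3 to the Eulerian setting via the substitution $\rho_i=1/y_i$. Two elementary identities drive this conversion:
\begin{equation*}
|\rho_i-\tilde\rho_i| = \frac{|y_i-\tilde y_i|}{y_i\tilde y_i}, \qquad |y_i-\tilde y_i| = \frac{|\rho_i-\tilde\rho_i|}{\rho_i\tilde\rho_i}.
\end{equation*}
Combined with $y_i\ge 1$ (from $V\colon[1,\infty)\to[0,1]$, i.e.\ $\rho_i\le 1$) and the initial assumption $\rho_{i,0}\ge\nu$, these identities transfer factors of $1$ and $\nu^{-2}$ between $y$- and $\rho$-differences.

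For the $L^\infty$ bound \eqref{eq:rhosup}, the entropy inequality \eqref{eq:entropydecrease} applied to $\eta(y)=(y-a)^{\pm}$ gives $\inf_j y_j(0)\le y_i(t)\le\sup_j y_j(0)$; inverting yields the corresponding bounds by $\inf_j\rho_j(0)$ and $\sup_j\rho_j(0)$. Since, by \eqref{eq:x0def}--\eqref{eq:rho0def}, each $\rho_i(0)$ is the $\rho_0$-average on $[x_i(0),x_{i+1}(0))$, one has $\inf\rho_0\le\rho_i(0)\le\sup\rho_0$, proving \eqref{eq:rhosup}.

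For the $L^1$-stability \eqref{eq:rhoL1}, the chain
\begin{equation*}
\sum_{i\in Z_\ell}|\rho_i(t)-\tilde\rho_i(t)|
\le \sum_{i\in Z_\ell}|y_i(t)-\tilde y_i(t)|
\le \sum_{i\in Z_\ell}|y_i(0)-\tilde y_i(0)|
\le \frac{1}{\nu^2}\sum_{i\in Z_\ell}|\rho_{i,0}-\tilde\rho_{i,0}|
\end{equation*}
combines the identities above with the Lagrangian $L^1$-contraction \eqref{eq:l1stab}. The BV estimate \eqref{eq:rhobv} follows analogously from $|\rho_\ell(t,\dott)|_{BV([0,P])}=\sum_{i\in Z_\ell}|\rho_{i+1}(t)-\rho_i(t)|$, using \eqref{eq:bvbnd} in place of \eqref{eq:l1stab} and the standard fact that the cell averages of $\rho_0$ have total variation at most $|\rho_0|_{BV([0,P])}$.

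The Lipschitz-in-time estimate \eqref{eq:rholip} is the most delicate step, because $\rho_\ell$ has not only time-varying values $\rho_i(t)$ but also moving discontinuities at $x_i(t)$. The plan is to compute $\partial_\tau\rho_\ell$ as a Radon measure: its absolutely continuous part on cells contributes $\sum_i|\dot\rho_i|(x_{i+1}-x_i)$, and its singular part at interfaces contributes $\sum_i|\rho_{i-1}-\rho_i||\dot x_i|$. Using \eqref{eq:drhodef} together with $\rho_i\le 1$, $|\dot x_i|\le 1+2\kappa$, and $|v_j-v_k|\le\|v'\|_\infty|\rho_j-\rho_k|$, each contribution is bounded by a constant times $\|v'\|_\infty(1+2\kappa)|\rho_\ell(\tau,\dott)|_{BV([0,P])}$; integrating in $\tau\in[s,t]$ and invoking \eqref{eq:rhobv} produces \eqref{eq:rholip}. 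The main obstacle is here: the moving mesh prevents a direct appeal to \eqref{eq:Lip}, so one must carefully track both $\dot\rho_i$ and $\dot x_i$ in computing the distributional time derivative of the Eulerian step function.
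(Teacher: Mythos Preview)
Your argument for \eqref{eq:rhosup}--\eqref{eq:rhobv} coincides with the paper's: these are obtained from the $y$-estimates via the reciprocal identities you write, together with $y_i\ge 1$ and $\rho_{i,0}\ge\nu$.

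For \eqref{eq:rholip} the paper follows the same plan but makes your Radon-measure computation rigorous by mollification: it replaces the indicators $\indic_i$ by smooth approximations $\chi_i^\eps$ (see \eqref{eq:chi_eps}), so that $\partial_t\rho_\ell^\eps$ can be computed pointwise. The two resulting terms, one carrying $\dot\rho_i$ and one carrying $\partial_t\chi_i^\eps=\Dlp[\omega_\eps(x-x_i)(\ob{v}_i+\kappa\Dlm v_i)]$, are exactly the cell and interface contributions you describe; the $L^1$ bound is established first and the limit $\eps\to 0$ is taken afterwards. Your direct distributional-derivative description is the informal version of this same calculation, so the approaches are essentially identical. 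One caveat: your route ends with a factor $|\rho_\ell(\tau,\dott)|_{BV}$ (hence $\nu^{-2}|\rho_0|_{BV}$ after \eqref{eq:rhobv}), not the bare constant $2(1+2\kappa)\norm{v'}_\infty$ stated in \eqref{eq:rholip}; the paper's own computation in fact also terminates at $2(1+2\kappa)$ times the total variation of $(v_i)_i$, so this discrepancy appears to sit in the stated constant rather than in your argument.
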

\begin{remark} Estimate  \eqref{eq:rhoL1} follows directly from the corresponding estimate  \eqref{eq:l1stab}.
However, this estimate cannot directly be expressed in terms of the $L^1$-norm in Eulerian 
coordinates.\footnote{We are grateful to Halvard O.~Storbugt for pointing this out.}
\end{remark}
\begin{proof}
  The inequalities \eqref{eq:rhosup}, \eqref{eq:rhoL1}, and
  \eqref{eq:rhobv} all follow from the corresponding inequalities for
  $y_\ell$, using that $\rho_i\ge\nu$.

  To prove \eqref{eq:rholip}, let $\omega_\eps$ be a standard
  mollifier and define
  \begin{equation}\label{eq:chi_eps}
    \chi^\eps_i(t,x)=\int_{-\infty}^x\big(
    \omega_\eps(\sigma-x_i(t))-\omega_\eps
    (\sigma-x_{i+1}(t))\big)\,d\sigma,
  \end{equation}
  and
  \begin{equation*}
    \rho^\eps_\ell(t,x)=\sum_i \rho_i(t) \chi_{i}^\eps(t,x).
  \end{equation*}
  Using that
  \begin{equation*}
    \frac{\partial}{\partial t} \chi^\eps_i(t,x)=
    \Dlp\left[\omega_\eps(x-x_i)\left(\ob{v}_i+\kappa\Dlm v_i\right)\right],
  \end{equation*}
  we get
  \begin{align*}
    \pt \rho^\eps_\ell(x,t)
    &=\sum_i \pt \left(\rho_i\chi^\eps_i(t,x)\right)\\
    &=\sum_i \dot{\rho_i}\chi^\eps_i(t,x)+\rho_i
      \frac{\partial}{\partial t}\chi^\eps_i(t,x)\\
    &=\sum_i-\rho_i \Bigl(\frac{\Dlp \ob{v}_i+\kappa\Dlp\Dlm v_i}{\Dlp
      x_i}\Bigr) \chi^\eps_i(t,x)+\rho_i
      \Dlp[\omega_\eps(x-x_i)(\ob{v}_i+\kappa\Dlm v_i)].
  \end{align*}
  Consequently
  \begin{align*}
    \norm{\rho^\eps_\ell(t,\dott)-\rho^\eps_\ell(s,\dott)}_{L^1([0,P])}
    &=\Bigl\| \int_s^t \pt
      \rho^\eps_\ell(\sigma,\dott)\,d\sigma\Bigr\|_{L^1([0,P])}\\
    &\le \int_s^t \int_0^P \Bigl|\pt
      \rho^\eps_\ell(\sigma, \dott)
      \Bigr|\,dxd\sigma\\
    &\le \int_s^t \sum_i \int_0^P \Big[\rho_i\frac{\abs{\Dlp
      \ob{v}_i+\kappa\Dlp\Dlm v_i}}{\Dlp
      x_i}\chi_i^\eps \\
    &\qquad\qquad
      +\rho_i \abs{\Dlp \left[\omega_\eps(x-x_i)(\ob{v}_i+\kappa\Dlm
      v_i)\right]}\Big]\,dx d\sigma.
  \end{align*}
  Now we can send $\eps$ to zero to obtain
  \begin{align*}
    \norm{\rho_\ell(t,\dott)-\rho_\ell(s,\dott)}_{L^1([0,P])}
    &\le \int_s^t \sum_i \Big[\rho_i \left(\abs{\Dlp \ob{v}_i} + \kappa
      \abs{\Dlp\Dlm v_i}\right) \\
    &\qquad\qquad+ \rho_i
      \left(\abs{\Dlp\ob{v}_i}+\kappa\abs{\Dlp\Dlm v_i}\right)\Big]
      \,d\sigma\\
    &\le 2\left(1+2\kappa\right)\norm{v'}_\infty (t-s), 
  \end{align*}
  where we have used that $\rho_i\le 1$ and \eqref{eq:BVmean}. 
\end{proof}
  
\begin{remark*}
  It is natural to ask whether \eqref{eq:rhosup}, \eqref{eq:rhoL1},
  and \eqref{eq:rhobv} can be proved directly from the scheme for
  $\rho_i$, i.e., from \eqref{eq:drhodef}. Using elementary
  techniques, this is easily accomplished for \eqref{eq:rhosup}. For
  \eqref{eq:rhoL1} and \eqref{eq:rhobv} one would hope to eliminate
  the constant $1/\nu^2$ (in fact, one would surmise that the constant would
  equal unity). However, we only managed to prove  \eqref{eq:rhoL1}
  and \eqref{eq:rhobv} with constant unity if $N=1$. To do this
  we rewrite the equation \eqref{eq:drhodef} for $\dot{\rho}_i$ as
  \begin{equation*}
   \ell \dot{\rho}_i = -\rho_i^2\Dlp(v_i+\kappa\Dlm v_i),
  \end{equation*}
  where we have used $\rho_i=\ell/\Dlp x_i$. Then we have
  \begin{align*}
  \ell  \frac{d}{dt}\abs{\rho_i-\rho_{i-1}}
    &=
      \sign{\Dlm\rho_i}\Dlm\dot{\rho}_i\\
    &=\sign{\Dlm v_i}\Dlm\left(\rho_i^2\Dlp v_i\right)+
      \kappa\sign{\Dlm v_i}\Dlm\left(\rho_i^2\Dlp\Dlm v_i  \right).
  \end{align*}
  We sum over $i\in Z_\ell$, and consider each term on the right
  separately. For the first term we get
  \begin{align*}
    \sum_i \sign{\Dlm v_i}\Dlm\left(\rho_i^2\Dlp v_i\right)
    &=\sum_i \sign{\Dlm v_i}\left(\rho_i^2\Dlp v_i - \rho_{i-1}^2\Dlm
      v_i\right)\\
    &=\sum_i \sign{\Dlm v_i}\rho_i^2\Dlp v_i-\rho_i^2\abs{\Dlp v_i}\le 0.
  \end{align*}
  As to the second term
  \begin{align}
    \sum_i\sign{\Dlm v_i}\Dlm\left(\rho_i^2\Dlp\Dlm v_i  \right)
    &=\sum_i \sign{\Dlm v_i}\left(\rho_i^2\Dlp\Dlm v_i -
      \rho_{i-1}^2\Dlm\Dlm v_i\right) \notag\\
    &=\sum_i\sign{\Dlm v_i}\Big(\rho_i^2\left(\Dlp v_i-\Dlm
      v_i\right)\notag \\
    &\qquad\qquad\qquad\qquad-
      \rho_{i-1}^2\left(\Dlm v_i-\Dlm v_{i-1}\right)\Big)\notag\\
    &=\sum_i \rho_i^2 \left[\Dlp v_i-\Dlm v_i\right]
      \left(\sign{\Dlm v_i}-\sgn{\Dlp v_i}\right)\le 0. \label{eq:bv_rho}
  \end{align}
  Thus
  \begin{equation*}
    \frac{d}{dt} \abs{\rho_\ell(t,\dott)}_{BV}\le 0,
  \end{equation*}
  and \eqref{eq:rhobv} holds with constant equal to $1$. We
  investigate the general case $N>1$ numerically. A natural numerical
  scheme consists in solving \eqref{eq:MAIN_shortmodel} by the Euler
  method, i.e., replacing \eqref{eq:MAIN_shortmodel} by
  \begin{equation}
    \label{eq:emicromod}
    \frac{1}{\Delta t}\big(x_i(t+\Delta t)-x_i(t)\big)
    =\ob{v_i(t)} + \kappa\Dlm v_i(t),
  \end{equation}
  where we must choose $\Dt\le \ell$ in order to avoid collisions. We
  choose $\kappa=0$, $N=10$, $c_j=1/10$ for $j=0,\ldots,4$,
  $\ell=1/45$, 
  $\Delta t=\ell$ and initial data given by
  \begin{equation*}
    \rho_0(x)=
    \begin{cases}
      1.0 &\abs{x}<0.5,\\ 0.05 &\text{otherwise,}
    \end{cases}
  \end{equation*}
  for $x$ in the interval $[-2,2]$, and extended periodically.
  \begin{figure}[h!]
    \centering
    \begin{tabular}[h!]{lr}
      \includegraphics[width=0.48\linewidth]{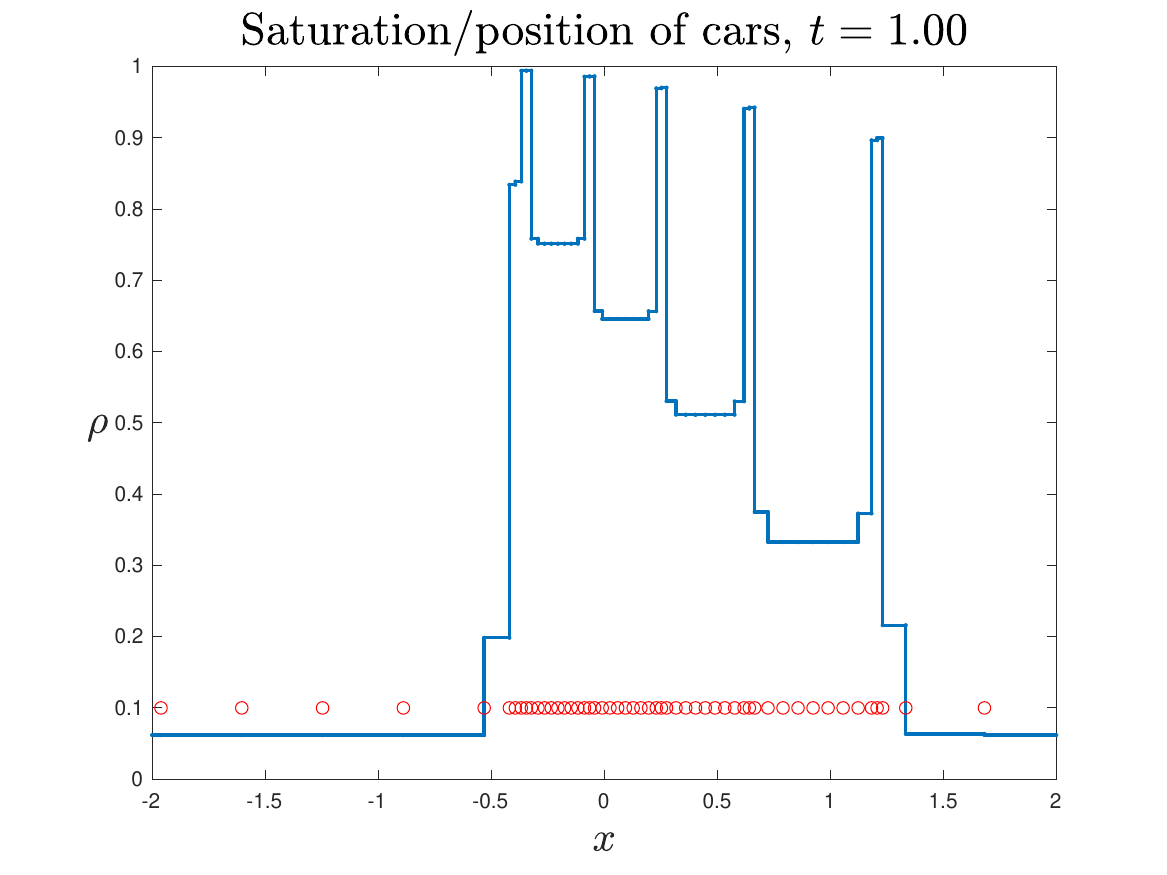}
      &\includegraphics[width=0.48\linewidth]{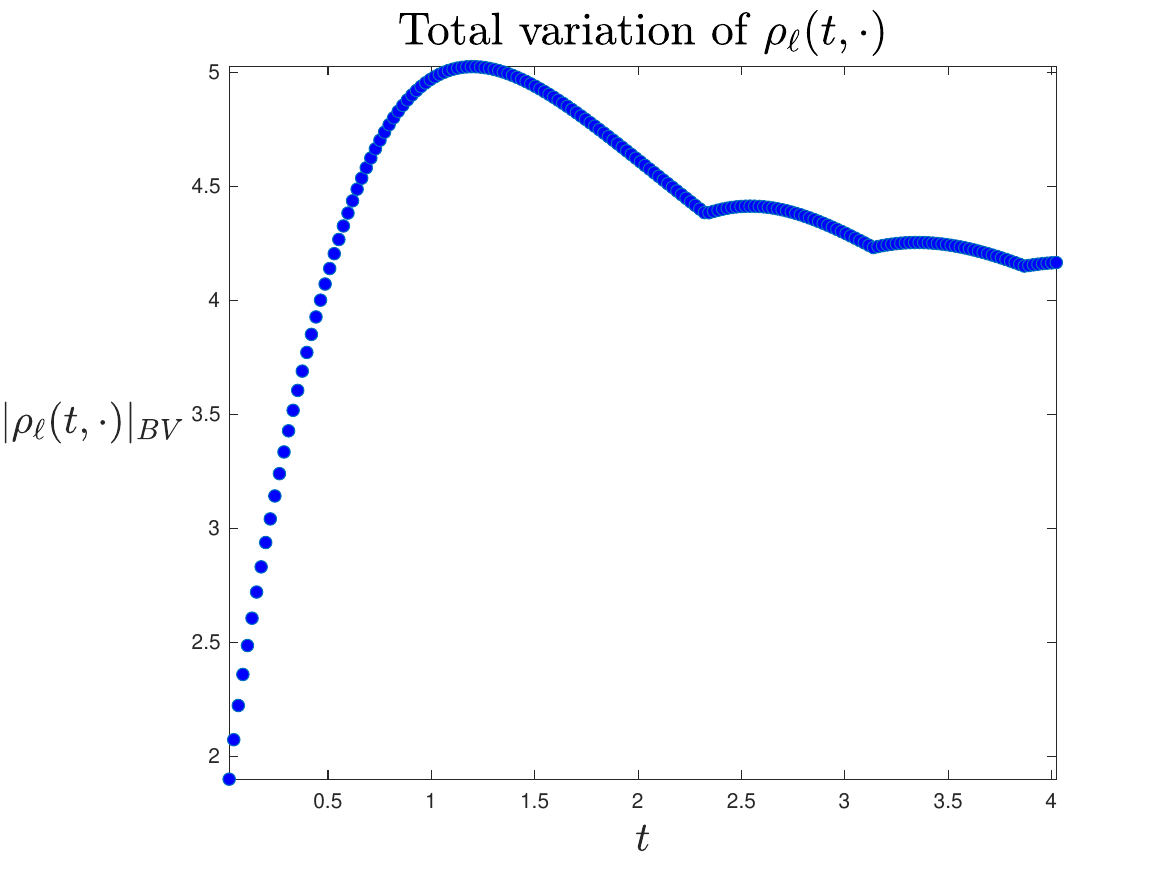}
    \end{tabular}
    \caption{Left: $\rho_\ell(1,x)$, the red circles at the bottom
      indicate the position of the vehicles. Right: The total
      variation of $\rho_\ell$ as a function of $t\in[0,4]$.}
    \label{fig:1}
  \end{figure}
  From Figure~\ref{fig:1} we see that $\rho_\ell$ develops large
  oscillations, and that it most likely is not true that
  $\abs{\rho_\ell(t,\dott)}_{BV}\le \abs{\rho_0}_{BV}$ for the model
  specified by \eqref{eq:MAIN_shortmodel} when $N>1$.
\end{remark*}

\begin{corollary}
  \label{cor:rhoconv}
  Under the same assumptions as in Lemma~\ref{lem:convergence}, for any
  $T>0$ there
  exists a function $\rho\in C([0,T];L^1([0,P]))\cap L^\infty([0,T];BV([0,P]))$ such
  that (up to a subsequence)
  \begin{equation*}
    \lim_{\ell\to 0} \rho_\ell = \rho.
  \end{equation*}
\end{corollary}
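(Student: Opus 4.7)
The plan is to deduce the corollary from Lemma~\ref{lem:convergence} by a standard compactness argument that exactly mirrors the one already invoked for $y_\ell$ earlier in the text: combine the uniform spatial $BV$ bound with the uniform temporal Lipschitz bound and apply a Kolmogorov--Riesz--Fr\'echet-type theorem (in the form of \cite[Thm.~A.8]{HoldenRisebro}).

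First, I would note that estimate \eqref{eq:rhosup} gives a uniform $L^\infty$ bound on $\rho_\ell$, and estimate \eqref{eq:rhobv} gives a uniform bound on $|\rho_\ell(t,\cdot)|_{BV([0,P])}$ valid for every $t \in [0,T]$. By Helly's selection theorem, for each fixed $t$ the family $\{\rho_\ell(t,\cdot)\}_{\ell>0}$ is precompact in $L^1([0,P])$. Working on the torus $\R/P\Z$ (using the $P$-periodic extension) so there are no boundary issues, this spatial precompactness holds uniformly in $t$.

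Second, estimate \eqref{eq:rholip} says that the family of maps $t \mapsto \rho_\ell(t,\cdot) \in L^1([0,P])$ is equicontinuous, in fact equi-Lipschitz, with constant independent of $\ell$. Combining pointwise-in-$t$ $L^1$ precompactness with uniform equicontinuity in $t$ is precisely the hypothesis of \cite[Thm.~A.8]{HoldenRisebro}: a diagonal extraction over a countable dense set of times, followed by an Ascoli-type upgrade, yields a subsequence $\ell_k \to 0$ and a limit $\rho$ such that $\rho_{\ell_k} \to \rho$ in $C([0,T];L^1([0,P]))$.

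Third, I would check the claimed regularity of $\rho$. Continuity $\rho \in C([0,T];L^1([0,P]))$ is immediate from uniform convergence. The $L^1$ convergence at each fixed $t$ combined with the lower semicontinuity of the total-variation seminorm gives
\begin{equation*}
|\rho(t,\cdot)|_{BV([0,P])} \le \liminf_{k \to \infty} |\rho_{\ell_k}(t,\cdot)|_{BV([0,P])} \le \frac{1}{\nu^2}|\rho_0|_{BV([0,P])},
\end{equation*}
so $\rho \in L^\infty([0,T];BV([0,P]))$. There is no real obstacle here; all the work has already been done in Lemma~\ref{lem:convergence}, and the only mildly subtle point is passing the $BV$ estimate to the limit via lower semicontinuity rather than direct convergence.
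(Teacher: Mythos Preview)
Your proposal is correct and matches the paper's approach: the paper states the corollary without proof, treating it as an immediate consequence of the estimates in Lemma~\ref{lem:convergence} via the same compactness result \cite[Thm.~A.8]{HoldenRisebro} already invoked for $y_\ell$. Your write-up simply makes explicit the standard Helly/Ascoli argument and the lower-semicontinuity step for the $BV$ bound that the paper leaves implicit.
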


\section{The limit}\label{sec:limit}

Having established the existence of the limit of $\rho_\ell$ to a function $\rho$ as $\ell\to 0$ in Corollary \ref{cor:rhoconv}, we now need to show that the limit is the unique weak entropy solution of the LWR equation, that is, 
\begin{equation}\label{eq:kruzkov}
      \int_0^\infty\int_0^P \big(\eta(\rho)
    \test_t +q(\rho)\test_x\big) \,dxdt
      +\int_0^P \eta(\rho_0)\test(0,x)\,dx\ge 0
  \end{equation}
for all non-negative $P$-periodic test functions $\test\in C^\infty_c([0,\infty)\times[0,P])$.   Here $\eta$ is a convex (entropy) function and  $q$ is the entropy flux, satisfying $q'(\rho)=\eta'(\rho)(\rho v(\rho))'$. 

\begin{theorem}  \label{lem:weaksolN}
Let $v$ be a  Lipschitz continuous non-increasing velocity function $v\colon[0,1] \to [0,1]$, with $v(0)=1$ and  $v(1)=0$. 
Let  $P>0$ and $\rho_0\in L^1([0,P])\cap BV([0,P])$ be $P$-periodic such
that $\rho_0(x)\in [\nu,1]$ for all $x$ for some $\nu>0$.  Define
$x_i(0)$ by   \eqref{eq:x0def} for $i\in Z_\ell$, and let
$x_i(t)$ be defined by \eqref{eq:MAIN_shortmodel}.

Define $\rho_i$ for $i\in Z_\ell$ by  \eqref{eq:drhodef}, and  $\rho_\ell(t,x)$ by \eqref{eq:rho_ell_def} for  $\ell>0$.  Denote the limit as $\ell\to0$ of $\rho_\ell(t,x)$, granted by Corollary \ref{cor:rhoconv}, by $\rho$ with
 $\rho\in C([0,T];L^1([0,P]))\cap L^\infty([0,T];BV([0,P]))$.
 Then $\rho$ is the unique weak entropy solution
  of the scalar conservation law
\begin{equation}
\rho_t+(\rho v(\rho))_x=0, \quad \rho|_{t=0}= \rho_0.
\end{equation}  
\end{theorem}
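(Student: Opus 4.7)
The plan is to verify the Kruzkov inequality \eqref{eq:kruzkov} by passing to the limit $\ell\to 0$ in the discrete Lagrangian entropy inequality \eqref{eq:entrineq}, tested against the Eulerian test function composed with the particle trajectories. This simultaneously performs the continuum limit and the Lagrangian-to-Eulerian change of variables at the discrete level, where the strict positivity $\rho\ge\nu$ makes the substitution trivial and sidesteps changing coordinates on a merely BV function.

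\textbf{Choice of entropy pair.} Given a convex $C^2$ Eulerian entropy $\eta_E$ with flux $q_E$ satisfying $q_E'(\rho)=\eta_E'(\rho)(\rho v(\rho))'$, I would introduce the Lagrangian entropy $\eta_L(y)=y\,\eta_E(1/y)$ and the Lagrangian flux $Q_L$ defined by $Q_L'(y)=\eta_L'(y)V'(y)$, fixing the integration constant so that
\[
\rho\,\eta_L(1/\rho)=\eta_E(\rho),\qquad v(\rho)\eta_E(\rho)-Q_L(1/\rho)=q_E(\rho)
\]
identically on $[\nu,1]$. A short calculation gives $\eta_L''(y)=\eta_E''(1/y)/y^3\ge 0$, so $\eta_L$ is convex and \eqref{eq:entrineq} applies with $\eta=\eta_L$, $Q=Q_L$.

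\textbf{Main step.} Fix a non-negative $P$-periodic $\varphi\in C^\infty_c([0,\infty)\times\R)$ and set $\psi_i(t)=\varphi(t,x_i(t))\ge 0$. Multiply \eqref{eq:entrineq} by $\psi_i$, sum over $i\in Z_\ell$, and integrate in $t$; then integrate by parts in time (using compact support) and perform discrete summation by parts in $i$ (no boundary contributions, by periodicity). Using $\dot\psi_i=\varphi_t(t,x_i)+\dot x_i\,\varphi_x(t,x_i)$ and the identity $\ell=\rho_i\Dlp x_i$, every Lagrangian Riemann sum converts to an Eulerian one, e.g.
\[
\ell\sum_i\eta_L(y_i)\,\varphi_t(t,x_i)=\sum_i\eta_E(\rho_i)\,\varphi_t(t,x_i)\,\Dlp x_i,
\]
which converges to $\int_0^P\eta_E(\rho)\,\varphi_t\,dx$ as $\ell\to 0$ by the strong $L^1$ convergence of Corollary~\ref{cor:rhoconv}, the $L^\infty$ bound \eqref{eq:rhosup}, and the smoothness of $\varphi$. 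Parallel treatments of the $\dot x_i\varphi_x$ and $\ob{Q_i}\Dlm\psi_i$ terms (using that $\ell\sum_i|\ob{v_i}-v_i|=O(\ell)$ via the BV bound, since $\sum_j c_j=1$ and $N$ is fixed) deliver, after assembling the limits,
\[
\int_0^\infty\!\int_0^P\!\big[\eta_E(\rho)\varphi_t+q_E(\rho)\varphi_x\big]\,dx\,dt+\int_0^P\eta_E(\rho_0)\,\varphi(0,x)\,dx\ge 0,
\]
i.e.\ \eqref{eq:kruzkov}. The $\kappa$-contribution $-\kappa\sum_i\eta_L'(y_i)\Dlm V(y_i)\Dlm\psi_i$ is $O(\ell)$: $|\Dlm V(y_i)|\le\|V'\|_\infty|\Dlm y_i|$ and $|\Dlm\psi_i|\le\|\varphi_x\|_\infty\ell/\nu$, combined with the BV bound \eqref{eq:bvbnd}, bound it by $C\ell\to 0$.

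\textbf{Conclusion and main obstacle.} Since $\eta_E$ was an arbitrary convex $C^2$ entropy, Kruzkov's uniqueness theorem identifies $\rho$ as the unique entropy solution of the LWR equation with initial datum $\rho_0$; uniqueness also upgrades the subsequential convergence in Corollary~\ref{cor:rhoconv} to convergence of the entire sequence $\rho_\ell$. The main anticipated obstacle is the careful quantification of the Riemann-sum passage to the limit in this mixed Lagrangian/Eulerian framework: pointwise evaluations such as $\varphi_t(t,x_i(t))$ and $\varphi_x(t,x_i(t))$ must be replaced by their continuum counterparts with explicit $O(\ell)$ errors absorbed using \eqref{eq:rholip} for time-uniform control and the $L^\infty$/BV bounds in space. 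The $\kappa$-contribution, despite nominally involving the second-order difference $\Dlp\Dlm V$, is tame because paired with a smooth test function in the weak formulation it scales as $O(\ell)$ via the BV bound.
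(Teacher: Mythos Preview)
Your approach is correct and is genuinely different from the paper's. The paper argues entirely in Eulerian variables: it introduces mollified indicator functions $\chi_i^\eps$, differentiates $\eta_\ell^\eps$ in time, sends $\eps\to 0$, and then must establish the sign of the discrete entropy production directly in Eulerian form via the computation of $e_i$, which once again exploits $\Dlm c_j\le 0$. You instead recycle the Lagrangian entropy inequality \eqref{eq:entrineq}, where that sign information has already been captured, and convert to Eulerian coordinates only at the level of test functions by composing $\varphi$ with the particle trajectories. The algebraic hinge of your argument is the pair of identities $\eta_L(y)=y\,\eta_E(1/y)$ and $Q_L(1/\rho)=v(\rho)\eta_E(\rho)-q_E(\rho)$, which make the $v\eta_E$ contributions coming from $\dot x_i\varphi_x$ and from $\bar Q_i\Dlm\psi_i$ cancel (up to $\bigOh(\ell)$ once $\bar v_i,\bar Q_i$ are replaced by $v_i,Q_i$ and $\Dlm x_i$ by $\Dlp x_i$ using the BV bounds). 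What your route buys is economy: no $e_i$ computation, and the $\kappa$-term is dispatched by a single BV estimate rather than a separate sign argument. What the paper's route buys is that the entire limit identification is carried out in the same Eulerian coordinates in which Lemma~\ref{lem:convergence} and Corollary~\ref{cor:rhoconv} are stated; your proof tacitly relies on the fact that the Riemann sums $\sum_i f(\rho_i)\,g(t,x_i)\Dlp x_i$ differ from $\int_0^P f(\rho_\ell)\,g\,dx$ by $\bigOh(\ell)$ (via $\Dlp x_i\le \ell/\nu$ and smoothness of $g$), which you state but should make explicit in a full write-up.
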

\begin{proof}
  Let $(\eta,q)$ be an entropy/entropy flux pair.  We define
  \begin{align*}
    \eta_\ell(t,x)
    &=\sum_i \eta_i(t)\indic_i(t,x),\\
    \eta^\eps_\ell(t,x)
    &=\sum_i \eta_i(t)\chi^\eps_i(t,x),
  \end{align*}
  where, in the obvious notation, $\eta_i=\eta(\rho_i)$, and
  $\chi^\eps_i$ is given by \eqref{eq:chi_eps}.  The dynamics of
  $\eta_i$ is given by
  \begin{equation*}
    \dot{\eta}_i = -\eta'(\rho_i)\rho_i  \frac{\Dlp \ob{v}_i +\kappa \Dlp\Dlm v_i}{\Dlp  x_i}.
  \end{equation*}

  Fix a nonnegative test function $\test$ and calculate
  \begin{align}
    \int_0^\infty
    &\int_0^P
      \eta^\eps_\ell \test_t \,dxdt \notag\\
    &=-\int_0^P \eta^\eps_\ell(0,x)\test(0,x)\,dx -
      \int_0^\infty\int_0^P \pt\eta^\eps_\ell \test \,dxdt \notag\\
    &=-\int_0^P \eta^\eps_\ell(0,x)\test(0,x)\,dx -
      \int_0^\infty\int_0^P
      \sum_i  \Bigl(\dot{\eta}_i\chi^\eps_i + \eta_i \frac{\partial}{\partial t}\chi^\eps_i\Bigr)\test\,dxdt 
      \notag  \\
    &=-\int_0^P \eta^\eps_\ell(0,x)\test(0,x)\,dx \notag\\
    &\quad -
      \int_0^\infty\int_0^P
      \sum_i  \Bigl[\Bigl(-\eta'(\rho_i)\rho_i  \frac{\Dlp \ob{v}_i +\kappa \Dlp\Dlm v_i}{\Dlp  x_i}\Bigr)\chi^\eps_i \notag\\
    &\qquad\qquad+
      \eta_i \Dlp\left[ \omega_\eps(x-x_i)\left(\ob{v}_i+\kappa\Dlm v_i\right) \right]\Bigr]\test\,dxdt. \label{eq:weak_eps}
  \end{align}
  Next we want to take the $\eps\to0$ limit. To that end, we consider
  some of the terms separately.  We have
  \begin{align*}
    &\int_0^P
      \sum_i \Bigl(-\eta'(\rho_i)\rho_i  \frac{\Dlp \ob{v}_i +\kappa \Dlp\Dlm v_i}{\Dlp  x_i}\Bigr)\chi^\eps_i\test \,dx\\
    &\qquad \overset{\eps\to 0}{\longrightarrow}
      \sum_i \int_{x_i}^{x_{i+1}}\Bigl(-\eta'(\rho_i)\rho_i  \frac{\Dlp \ob{v}_i +\kappa \Dlp\Dlm v_i}{\Dlp  x_i}\Bigr)\test \,dx\\
    &\qquad\qquad= \sum_i \Bigl(-\eta'(\rho_i)\rho_i (\Dlp \ob{v}_i +\kappa \Dlp\Dlm v_i)\Bigr)\test_{i+1/2} 
  \end{align*}
  where
  \begin{equation*}
    \test_{i+1/2}=\frac1{\Dlp  x_i} \int_{x_i}^{x_{i+1}}\test\,dx.
  \end{equation*}
  Furthermore, we will use
  \begin{align*}
    &\int_0^P\eta_i \Dlp\left[ \omega_\eps(x-x_i)\left(\ob{v}_i+\kappa\Dlm v_i\right) \right]\test\,dx \\
    &\qquad=\int_0^P\eta_i \Dlp\left[ \omega_\eps(x-x_i)\left(\ob{v}_i+\kappa\Dlm v_i\right) \test\right]\,dx \\
    &\qquad \overset{\eps\to 0}{\longrightarrow}\eta_i \Dlp\left[ \left(\ob{v}_i+\kappa\Dlm v_i\right) \test_i\right],
  \end{align*}
  with $\test_i=\test(t,x_i(t))$.  Note that while $\test_{i+1/2}$ is
  defined by a spatial average, $\test_i$ is given as a pointwise
  value.  This yields that the limit as $\eps\to0$ in
  \eqref{eq:weak_eps} satisfies
  \begin{align*}
    \int_0^\infty
    &\int_0^P
      \eta_\ell \test_t \,dxdt\\
    &=-\int_0^P \eta_\ell(0,x)\test(0,x)\,dx \\
    &\quad -
      \int_0^\infty
      \Bigl(\sum_i \Bigl(-\eta'(\rho_i)\rho_i (\Dlp \ob{v}_i +\kappa \Dlp\Dlm v_i)\Bigr)\test_{i+1/2} \\
    &\qquad\qquad+
      \eta_i \Dlp\left[ \left(\ob{v}_i+\kappa\Dlm v_i\right)\phi_i \right]\Bigr)\,dt \\
    &=-\int_0^P \eta_\ell(0,x)\test(0,x)\,dx\\
    &\quad + \int_0^\infty \sum_i \Big(\rho_i \eta'(\rho_i) \Dlp
      \ob{v}_i\test_{i+1/2} - \eta_i
      \Dlp \ob{v}_{i}\test_{i+1}-\eta_i\ob{v}_i\Dlp\test_i \\
    &\qquad \hphantom{ + \int_0^\infty \sum_i} +\kappa\left(\Dlp\Dlm
      v_i\right)
      \left(\rho_i\eta'(\rho_i)\test_{i+1/2}-\eta_i\test_i\right)\Big)\,dt.
  \end{align*}
 
  Next we want to replace $\test_{i+1/2}$ and $\test_{i+1}$ with
  $\test_i$, and by doing so, we will introduce an error term of order
  $\bigOh(\ell)$.  First observe that
  \begin{align*}
    \abs{\test_{i+1/2}-\test_i} &\le \frac12
                                  \norm{\test_x}_{\infty}
                                  \Dlp x_i\le  \frac{\ell}{2\nu} \norm{\test_x}_{\infty}, \\ 
    \abs{ \Dlp\test_{i}}=  \abs{\test_{i}-\test_{i+1}} & \le 
                                                         \norm{\test_x}_{\infty}  \Dlp x_i\le  \frac{\ell}{\nu}\norm{\test_x}_{\infty},
  \end{align*}
  since
  \begin{equation*}
    \Dlp x_i(t)\le \ell \sup_i y_i(t)\le \frac{\ell}{\nu},
  \end{equation*}
  where
  $\norm{\test_x}_{\infty}=\norm{\test_x}_{L^\infty([0,\infty)\times[0,P])}$.

  For the relevant terms we add and subtract $\test_i$. Thus
  \begin{align*}
    \Bigl|\int_0^\infty \sum_i \rho_i \eta'(\rho_i) \Dlp
    \ob{v}_i(\test_{i+1/2}-\test_i)\,dt\Bigr| &\le \sup_{\rho\in[0,1]}\abs{\eta'(\rho)} T \sum_i \abs{\Dlp \ob{v}_i} \norm{\test_x}_{\infty}\frac\ell\nu\\
                                              &\le \sup_{\rho\in[0,1]}\abs{\eta'(\rho)} \frac{T\ell}\nu\norm{\test_x}_{\infty} \abs{\ob{v}}_{BV}\\
                                              &\le \sup_{\rho\in[0,1]}\abs{\eta'(\rho)} \frac{T\ell}\nu\norm{\test_x}_{\infty} \norm{v'}_{\infty}\abs{\rho_\ell(t,\dott)}_{BV}\\
                                              &\le \sup_{\rho\in[0,1]}\abs{\eta'(\rho)} \frac{T\ell}{\nu^3}\norm{\test_x}_{\infty} \norm{v'}_{\infty}\abs{\rho_0}_{BV}=\bigOh(\ell),
  \end{align*}
  where we used that $\rho_\ell\in[0,1]$, \eqref{eq:BVmean},
  \eqref{eq:rhobv}, and $T>0$ is such that $\test(t,x)=0$ for $t>T$
  and all $x$. Next, we find
  \begin{align*}
    \Bigl|\int_0^\infty \sum_i \eta_i
    \Dlp \ob{v}_{i}(\test_{i+1}-\test_i)\,dt\Bigr| &\le \sup_{\rho\in[0,1]}\abs{\eta(\rho)}  
                                                     \frac{T\ell}{\nu^3}\norm{\test_x}_{\infty} \norm{v'}_{\infty}\abs{\rho_0}_{BV}=\bigOh(\ell),
  \end{align*}
  by the same estimates as above.  Finally, we consider
  \begin{align*}
    \Bigl|\int_0^\infty \sum_i\left(\Dlp\Dlm
    v_i\right)
    \rho_i\eta'(\rho_i)(\test_{i+1/2}-\test_i)\,dt\Bigr| &\le  \sup_{\rho\in[0,1]}\abs{\eta'(\rho)} \frac{T\ell}{2\nu} \norm{\test_x}_{\infty} 2\abs{v}_{BV}\\
                                                         &\le   \sup_{\rho\in[0,1]}\abs{\eta'(\rho)}\norm{v'}_{\infty}\abs{\rho_0}_{BV} \frac{T\ell}{\nu}=\bigOh(\ell), 
  \end{align*}
  as in the estimates above.  Thus
  \begin{align}
    \int_0^\infty
    \int_0^P
    \eta_\ell \test_t \,dxdt
    &=-\int_0^P \eta_\ell(0,x)\test(0,x)\,dx \notag\\
    &\quad + \int_0^\infty \sum_i \Big(\rho_i \eta'(\rho_i) \Dlp
      \ob{v}_i\test_i - \eta_i
      \Dlp \ob{v}_{i}\test_i-\eta_i\ob{v}_i\Dlp\test_i\notag \\
    &\qquad \hphantom{ + \int_0^\infty \sum_i} +\kappa\left(\Dlp\Dlm
      v_i\right)
      \left(\rho_i\eta'(\rho_i)-\eta_i\right)\test_i\Big)\,dt+ {\mathcal O}(\ell).  \label{eq:nesten}
  \end{align}
  Define $h(\rho)=\rho \eta'(\rho)-\eta(\rho)$ and note that
  $h'(\rho)=\rho\eta''(\rho)\ge 0$. Using this, the last term above
  equals
  \begin{align*}
    \sum_i (\Dlp\Dlm v_i)\, \test_i h_i
    &=-\sum_i \Dlm v_i \Dlm(h_i\test_i) \\
    &=-\sum_i \Big(\Dlm v_i \Dlm h_i\,\test_{i-1} + h_i \Dlm v_i \Dlm\test_i\Big)\\
    &= -\sum_i \Dlm v_i \Dlm \rho_i h'(\hat\rho_i)\,\test_{i-1} +\bigOh(\ell)\\
    &\ge \bigOh(\ell),
  \end{align*}
  for some $\hat\rho_i$ between $\rho_i$ and $\rho_{i-1}$. We used the
  nonnegativity of the test function and that
  $\Dlm v_i \Dlm \rho_i\le 0$ since $v$ is non-increasing in $\rho$.

  We want to replace the term $\eta_i\ob{v}_i$ by $(\ob{\eta v})_i$,
  and this invokes an error of $\bigOh(\ell)$, as the following
  computation reveals.
  \begin{align}
    \Bigl|\int_0^\infty \sum_i \big(\eta_i\ob{v}_i-\ob{\eta v}_i\big)
    \Dlp\test_i\, dt\Bigr|
    &=\Bigl|\int_0^\infty \sum_i  \sum_{j=0}^N
      c_j\big(\eta_i-\eta_{i+j}\big)v_{i+j} \Dlp\test_i\, dt\Bigr| \notag\\
    &=\Bigl|\int_0^\infty \sum_i  \sum_{j=0}^N
      \sum_{k=0}^{j-1}c_j\big(\eta_{i+k}-\eta_{i+k+1}\big)v_{i+j}
      \Dlp\test_i\, dt\Bigr|\notag\\
    &\le  \norm{v}_\infty \int_0^T \sum_{j=0}^N c_j
      \sum_{k=0}^{j-1} \sum_i \abs{\eta_{i+k}-\eta_{i+k+1}} \abs{\Dlp\test_i}\, dt\notag\\
    &\le  \norm{v}_\infty \norm{\test_x}_\infty \frac{\ell}{\nu}
      \int_0^T \abs{\eta(t)}_{BV}\sum_{j=0}^N j c_j \, dt \notag\\
    &\le \norm{v}_\infty \norm{\test_x}_\infty
      \frac{\ell}{\nu}\norm{\eta'}_\infty
      \int_0^T \abs{\rho(t)}_{BV}\sum_{j=0}^N j c_j \, dt \notag\\
    &\le T \norm{v}_\infty \norm{\test_x}_\infty
      \frac{\ell}{\nu}\norm{\eta'}_\infty  \abs{\rho_0}_{BV}\sum_{j=0}^N j c_j \notag\\
    &=\bigOh(\ell).  \label{eq:eta_middel}
  \end{align}
  Thus we find that \eqref{eq:nesten} can be re-written as
  \begin{align}
    \int_0^\infty\int_0^P \eta_\ell \test_t \,dxdt
    &\ge -\int_0^P \eta_\ell(0,x)\test(0,x)\,dx \notag\\
    &\quad + \int_0^\infty \sum_i \left(\rho_i
      \eta'(\rho_i)-\eta_i\right)\Dlp
      \ob{v}_i\test_{i}\,dt-\int_0^\infty\int_0^P \left(\ob{\eta v}\right)_\ell
      \test_x\,dxdt+\bigOh(\ell).  \label{eq:eta_nesten}
  \end{align}
  In order to understand the convective term we compare this with the
  term involving the entropy flux.  We define
  \begin{equation*}
    q_\ell(t,x)=\sum_i q_i(t)\indic_i(t,x), \quad  q_i(t)=q(\rho_i(t)),
  \end{equation*}
  which implies that
  \begin{align}
    \int_0^\infty\int_0^P q_\ell\test_x\,dxdt
    = \int_0^\infty\sum_i  q_i  \Dlp \test_i \, dt &= \int_0^\infty\sum_i
                                                     \ob{q}_i\Dlp \test_i\,dt+\bigOh(\ell)\notag \\
                                                   & = -\int_0^\infty\sum_i \Dlp \ob{q}_i \test_i \,dt+  \bigOh(\ell),\label{eq:gensecondA}
  \end{align}
  where we have replaced $q_i$ by $\ob{q}_i$, resulting in an error of
  order $\bigOh(\ell)$, from the following computation
  \begin{align*}
    \Bigl|\int_0^\infty\sum_i  \big(q_i- \ob{q}_i\big)\Dlp \test_i \,dt  \Bigr|&=
                                                                                 \Bigl|\int_0^\infty\sum_i  \sum_{j=0}^N c_j\big(q_i- q_{i+j}\big)\Dlp \test_i \,dt  \Bigr| \\
                                                                               &\le \int_0^T\sum_i  \sum_{j=0}^N c_j\abs{q_i- q_{i+j}} \abs{\Dlp \test_i} \,dt  \\
                                                                               &\le T \norm{\test_x}_\infty \frac{\ell}{\nu}\norm{q'}_\infty  \abs{\rho_0}_{BV}\sum_{j=0}^N j c_j \\
                                                                               &=\bigOh(\ell),  
  \end{align*}    
  similarly to \eqref{eq:eta_middel}. The observant reader will also
  have noticed that we have replaced $\test_{i+1}$ by $\test_i$ in \eqref{eq:gensecondA}, which
  also yields another $\bigOh(\ell)$ error.

  Adding \eqref{eq:eta_nesten} and \eqref{eq:gensecondA} we get
  \begin{align*}
    \int_0^\infty\int_0^P& \big(\eta_\ell
                          \test_t +q_\ell\test_x\big) \,dxdt
                          +\int_0^P \eta_\ell(0,x)\test(0,x)\,dx\\                                 
                        &\ge \int_0^\infty \sum_i\Bigl[\underbrace{
                          \left(\rho_i \eta'(\rho_i)-\eta_i\right)\Dlp \ob{v}_i -
                          \Dlp\ob{q}_i}_{e_i}\Bigr]\test_i\,dt
                          -\int_0^\infty\int_0^P \left(\ob{\eta v}\right)_\ell
                          \test_x\,dxdt +  \bigOh(\ell),
  \end{align*} 
  and it remains to estimate the term $e_i$. To that end we find,
  using the formula \eqref{eq:byparts}, that
  \begin{align*}
    e_i
    &= -\sum_{j=1}^N \Dlm c_j \left[\left(\rho_i \eta'(\rho_i)-\eta_i\right)
      \left(v_{i+j}-v_i\right) - \left(q_{i+j}-q_i\right)\right]\\
    &= -\sum_{j=1}^N \Dlm c_j \int_{\rho_i}^{\rho_{i+j}} \Big(
      \left(\rho_i \eta'(\rho_i)-\eta_i\right) v'(s) - \eta'(s)s
      v'(s)-\eta'(s)v(s)\Big)\,ds\\
    &= -\sum_{j=1}^N \Dlm c_j \int_{\rho_i}^{\rho_{i+j}}\Big(
      v'(s)\left(\rho_i\eta'(\rho_i)-s\eta'(s)\right)
      -\eta_i'v'(s)-\eta'(s)v(s)\Big)\,ds\\
    &= -\sum_{j=1}^N \Dlm c_j \int_{\rho_i}^{\rho_{i+j}} \Big(v'(s)
      \int_s^{\rho_i} \frac{d}{d\sigma}(\sigma \eta'(\sigma))\,d\sigma
      -\eta'(\rho_i)v'(s)-\eta'(s)v(s)\Big)\,ds\\
    &= -\sum_{j=1}^N \Dlm c_j \int_{\rho_i}^{\rho_{i+j}} \Big(v'(s)
      \int_s^{\rho_i} \sigma\eta''(\sigma)+\eta'(\sigma)\,d\sigma
      -\eta'(\rho_i)v'(s)-\eta'(s)v(s)\Big)\,ds\\
    &=\sum_{j=1}^N\Dlm c_j \int_{\rho_i}^{\rho_{i+j}}  \int_{\rho_i}^s
      v'(s)\sigma\eta''(\sigma) \,d\sigma ds\\
    &\quad  -\sum_{j=1}^N \Dlm c_j
      \int_{\rho_i}^{\rho_{i+j}}\big(
      v'(s)(\eta_i-\eta(s))-\eta_iv'(s)
      -\eta'(s)v(s)\big)\,ds\\
    &\ge \sum_{j=1}^N\Dlm c_j \int_{\rho_i}^{\rho_{i+j}}\big(
      v'(s)\eta(s)+v(s)\eta'(s)\big)\,ds\\
    &= \sum_{j=1}^N\Dlm c_j \left(v_{i+j}\eta_{i+j}-v_i\eta_i\right)\\
    &=-\Dlp \left(\ob{v\eta}\right)_i.
  \end{align*}
  In the inequality we used that $\Dlm c_j v'(s)\ge 0$, as both $c_j$
  and $v$ are non-increasing, $\rho_i$ and $\rho_{i+j}$ are positive,
  and finally $\eta''(\sigma)\ge 0$.
  
  Therefore
  \begin{align*}
    \int_0^\infty\int_0^P \big(\eta_\ell
    &\test_t +q_\ell\test_x\big) \,dxdt
      +\int_0^P \eta_\ell(0,x)\test(0,x)\,dx\\                                 
    &\ge \int_0^\infty \sum_i e_i\test_i\,dt
      -\int_0^\infty\int_0^P \left(\ob{\eta v}\right)_\ell
      \test_x\,dxdt +  \bigOh(\ell)\\
    &\ge - \int_0^\infty \sum_i \Dlp \left(\ob{v\eta}\right)_i\test_i\,dt
      -\int_0^\infty\int_0^P \left(\ob{\eta v}\right)_\ell
      \test_x\,dxdt +  \bigOh(\ell)\\
    &= \int_0^\infty \sum_i \left(\ob{v\eta}\right)_i\Dlm \test_i\,dt \quad 
      -\int_0^\infty\int_0^P \left(\ob{\eta v}\right)_\ell
      \test_x\,dxdt +  \bigOh(\ell)\\
    &=\bigOh(\ell),
  \end{align*}
  and by taking $\ell\to0$, we obtain \eqref{eq:kruzkov}. By standard
  theory, see, e.g., \cite[Thm.~2.14]{HoldenRisebro}, it follows that
  $\rho$ is the unique weak entropy solution.
\end{proof}


\end{document}